\begin{document}
\title{Families of Log Canonically Polarized Varieties}
\author{Ariana Dundon\\ \small\textit{Department of Mathematics, Montgomery College, 51 Mannakee Street, SW36, Rockville, MD 20850}}

\maketitle
\section{Introduction}
Shafarevich's hyperbolicity conjecture states that for $q\geq 2$, $\sM_q$, the moduli stack of curves of genus $q$, is algebraically hyperbolic.  That is, there are no non-constant maps from $A\to \sM_q$ where $A$ is an abelian variety and from $\C\setminus \{0\}\to \sM_q$.  There is a useful interpretation of this result in terms of families, where by \emph{family} we mean a flat, projective morphism with connected fibers.  A family is \emph{isotrivial} if any two generic fibers are isomorphic.  Since non-isotrivial families correspond to non-constant maps into the moduli space, we can restate Shafarevich's hyperbolicity conjecture as follows: Any smooth family of curves of genus $q$ over an abelian variety must be isotrivial and any family of curves of genus $q$ over $\P^1$ is either isotrivial or has at least 3 singular fibers.  This conjecture was confirmed in a special case by Parshin \cite{Parshin68} and in general by Arakelov \cite{Arakelov71}.

These results have produced many interesting generalizations.  For a survey of current results in this area, \cite{Kovacs09b} is an excellent resource.  In particular, in \cite{Kovacs97a} and \cite[0.2]{Kovacs00a}, Kov\'{a}cs shows the algebraic hyperbolicity of $C_h$, the moduli stack of \emph{canonically polarized varieties}, that is, varieties for which the canonically line bundle is ample.  As with families of curves, one piece of this result is that any non-isotrivial family of canonically polarized varieties over $\P^1$ must have at least 3 singular fibers.  

In this paper, we will examine the generalization of this statement to families of varieties with other polarizations; specifically, we look at families of \emph{log canonically polarized varieties}, where the ample line bundle is given by the canonical bundle twisted by an effective divisor. With the addition of certain mild hypotheses, which we will decribe in detail in later sections, we prove an analogous result:

\begin{theorem}\label{main}
Let $g:(Y,D)\to \P^1$ be a family, $\Delta\subset \P^1$ the locus of $D$-singular fibers and $B_0=\P^1\setminus \Delta$.  Suppose that the line bundle $\omega_{Y/B}(D)$ is $g$-ample and that $|\Aut(Y_b)|<\infty$ for $b\in \P^1$.  Suppose also that $D$ is an snc divisor for which each component intersects the fibers of $g$ transversely and avoids the singular locus of the fibers over $\Delta$.  

Then either $g$ is isotrivial or $|\Delta|\geq 3$.  
\end{theorem}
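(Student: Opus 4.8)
The plan is to prove the dichotomy by assuming $g$ is \emph{not} isotrivial and deducing the numerical inequality $|\Delta|\geq 3$. The key reformulation is that on $\P^1$ the log cotangent bundle is the line bundle $\Omega^1_{\P^1}(\log\Delta)\cong\mathcal{O}_{\P^1}(|\Delta|-2)$, so that $\deg\Omega^1_{\P^1}(\log\Delta)>0$ is \emph{exactly} the statement $|\Delta|\geq 3$. Thus the whole theorem reduces to attaching to the non-isotriviality a single strictly positive invariant living on $\P^1$. First I would restrict to the smooth locus $g_0\colon(Y_0,D_0)\to B_0$ and record the logarithmic Kodaira--Spencer map $\rho\colon T_{B_0}\to R^1 g_{0*}\,T_{Y_0/B_0}(-\log D_0)$, whose target is the sheaf governing first-order deformations of the log-polarized fibers. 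Non-isotriviality together with the hypothesis $|\Aut(Y_b)|<\infty$ (which makes the induced map to the moduli stack $\mathcal{M}$ quasi-finite onto its image, hence nonconstant wherever $\rho\neq 0$) guarantees that $\rho$ does not vanish identically. This nonvanishing is the only point at which the geometric input about the family enters; everything downstream is a positivity computation on $\P^1$.

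Next I would invoke the positivity of the sheaves of relative log-pluricanonical forms. By the logarithmic analogue of Viehweg's weak positivity theorem---which the snc and transversality hypotheses on $D$ are precisely designed to make available, since they guarantee flatness and compatibility with base change of $g_*\bigl(\omega_{Y/B}(D)^{\otimes m}\bigr)$ along $\Delta$---the bundle $E_m:=g_*\bigl(\omega_{Y/B}(D)^{\otimes m}\bigr)$ is weakly positive for $m\gg 0$, so on the curve $\P^1$ it is nef and every quotient has nonnegative degree. The crucial step is then to couple $\rho$ to $E_m$ through iterated cup product and contraction against the log tangent sheaf: contraction yields maps $\Omega^p_{Y_b}(\log D_b)\to\Omega^{p-1}_{Y_b}(\log D_b)$, and composing the associated Hodge-theoretic graded pieces with $\rho$ produces, after dualizing, a nonzero Viehweg--Zuo-type subsheaf $\mathcal{A}\hookrightarrow\operatorname{Sym}^N\Omega^1_{\P^1}(\log\Delta)$ of positive degree. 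This construction carries the real content of the argument.

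Granting such an inclusion, the conclusion is immediate. On $\P^1$ one has $\operatorname{Sym}^N\Omega^1_{\P^1}(\log\Delta)\cong\mathcal{O}_{\P^1}\bigl(N(|\Delta|-2)\bigr)$, and a nonzero homomorphism from a positive-degree subsheaf $\mathcal{A}$ into this line bundle forces $N(|\Delta|-2)\geq\deg\mathcal{A}>0$, whence $|\Delta|\geq 3$. The transversality hypotheses reappear here only implicitly, through the identification of the symmetric power as a line bundle of the stated degree.

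I expect the main obstacle to lie in the coupling step rather than in the final degree count, and specifically in upgrading \emph{weak} positivity to \emph{strict} positivity. Weak positivity of $E_m$ only supplies nonnegativity, so the delicate issue is to extract a strictly positive-degree subsheaf from the Kodaira--Spencer coupling---that is, to ensure that the iterated contraction maps do not degenerate to zero before producing the inclusion, and that the resulting subsheaf of symmetric log-differentials genuinely has $\deg\mathcal{A}>0$. This requires controlling the log Hodge and deformation data across the boundary $\Delta$, and it is exactly here that the hypotheses that each component of $D$ meets the fibers transversely and avoids the singular locus over $\Delta$ become indispensable, since they keep the relative log cotangent complex and the relevant base-change maps well-behaved along the degenerate fibers.
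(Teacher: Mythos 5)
Your proposal follows the Viehweg--Zuo strategy (producing a positive-degree subsheaf of $\sym^N\Omega^1_{\P^1}(\log\Delta)$ from the Kodaira--Spencer map), which is a genuinely different route from the one taken here: the paper instead shows that $g_*\omega_{Y/\P^1}(D)^m$ is \emph{ample} (Theorem \ref{det} and Corollary \ref{ample}), uses this to conclude that $\omega_{Y/\P^1}(D)$ is ample on all of $Y$, and then derives a contradiction from the logarithmic vanishing theorem (Theorem \ref{van}), which under the assumption $|\Delta|\leq 2$ (equivalently, $\omega_{\P^1}(\Delta)^{-1}$ nef) forces $H^n(Y,\omega_Y)=0$, contradicting Serre duality. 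Both strategies are legitimate in principle, but as written your argument has a genuine gap exactly where you locate ``the real content'': the existence of the nonzero positive-degree subsheaf $\mathcal{A}\hookrightarrow\sym^N\Omega^1_{\P^1}(\log\Delta)$ is asserted, not constructed. This is not a routine step; it requires the full comparison machinery between the Hodge-theoretic (Higgs) data of the family and symmetric log-differentials on the base, together with a negativity result for the kernels of the iterated Kodaira--Spencer maps, none of which you supply.

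Moreover, the final degree count does not close without \emph{strict} positivity. Weak positivity of $E_m=g_*\omega_{Y/\P^1}(D)^m$ only gives nefness on $\P^1$; a nonzero map from a degree-zero subsheaf into $\sO_{\P^1}(N(|\Delta|-2))$ yields only $|\Delta|\geq 2$, which is not the theorem. The strict inequality $\deg\mathcal{A}>0$ is precisely what must be extracted from non-isotriviality together with $|\Aut(Y_b)|<\infty$, and in the paper this is where the substantive work lives: Theorem \ref{det} proves ampleness of $\det g_*\omega_{Y/\P^1}(D)^n$ via maximal variation and Koll\'ar's ampleness criterion, and Corollary \ref{ample} upgrades this to ampleness of the whole pushforward. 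You correctly identify this as ``the delicate issue'' but leave it unresolved, so the proposal as it stands does not constitute a proof.
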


\section{Notation and Definitions}
All varieties will be defined over an algebraically closed field of characteristic $0$.  

First we fix some notation.  $B$ will denote a smooth projective curve and $g:Y\to B$ a flat, projective morphism with connected fibers from a smooth projective variety $Y$ to $B$.  Let $\sL$ be line bundle on $Y$.  For $b\in B$, we denote by $Y_b$ the fiber of the map $g$ over the point $b$, and let $\sL_b$ denote the restriction of the line bundle $\sL$ to $Y_b$.  We call the map $f$ a \emph{family} of pairs $(Y_b,\sL_b)$ over the curve $B$.  Such a family is \emph{isotrivial} if for any two general points $a, b\in B$, there's an isomorphism $\phi:Y_a\to Y_b$ and such that $\phi^*\sL_b\iso \sL_a$. We will consider non-isotrivial families since they correspond to non-constant maps into the moduli stack.  

If the line bundle $\sL_b$ is ample, then it is a polarization for the fiber $Y_b$, and if this is the case for all $b\in B$ we have a family of polarized varieties.  If $\sL_b\iso \omega_{Y_b}$ we have a family of canonically polarized varieties.  Accordingly, it is interesting to study varieties with polarizations of a similar but more general form, those where $\sL_b\iso \omega_{Y_b}(D_b)$ for $D_b$ an effective divisor on the fiber $Y_b$.  Such varieties are known as \emph{log canonically polarized}.  

Consequently, to study families of log canonically polarized varieties, we consider the following situation: Let $g:(Y, \sL)\to B$ be a family of polarized varieties and $D$ an effective divisor on $Y$ such that $\sL\iso \omega_{Y/B}(D)$.  We will also require that $D$ is horizontal with respect to $g$, that is, no irreducible component of $D$ is contained in a fiber of $g$.  This means that $D_b=D|_{Y_b}$ is a divisor on the fiber $Y_b$, and in particular, $\sL_b\iso \omega_{Y_b}(D_b)$.  Thus we will sometimes denote by $g:(Y, D)\to B$ a family along with an effective horizontal divisor $D$ with the understanding that the polarization on a fiber $Y_b$ is the line bundle $\sL_b\iso \omega_{Y_b}(D_b)$. 

The following definition describes what it means for fibers of a family $g:(Y, D)\to B$ to be singular.  

\begin{definition}\label{delta}
Let $\Delta$ be the finite set of points in $B$ over which $\Omega_{Y/B}(\log D)$ is not locally free.  We will refer to $\Delta$ as the locus of \emph{$D$-singular fibers}.  
\end{definition}

The fibers over the points of $\Delta$ are exactly those that are either singular themselves or those that don't meet $D$ transversely, that is, fibers $F$ where the divisor $D+F$ is not simple normal crossing.  

Note however, as following lemma states, that if $D+g^*\Delta$ is snc, then $\Omega_{Y/B}\big(\log (D+g^*\Delta)\big)$ is locally free.  The proof of this lemma is straightforward and so we omit it.  

\begin{lemma}\label{locfree}
Let $g:(Y, D)\to B$ be a family and let $\Delta\subset B$ be the locus of $D$-singular fibers.  Suppose that $D+g^*\Delta$ is snc.  Then $\Omega_{Y/B}\big(\log (D+g^*\Delta)\big)$ is locally free.  
\end{lemma}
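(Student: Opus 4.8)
The plan is to prove Lemma \ref{locfree}, which asserts that local freeness of the relative log differentials $\Omega_{Y/B}(\log(D+g^*\Delta))$ follows once the boundary $D+g^*\Delta$ is a simple normal crossings divisor. Let me sketch how I would argue this.

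=== BEGIN PROOF PROPOSAL ===

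\begin{proof}[Proof proposal]
The plan is to reduce everything to a local computation near an arbitrary closed point $y\in Y$ and show that $\Omega_{Y/B}(\log(D+g^*\Delta))$ is a locally free sheaf of the expected rank $n=\dim Y-1$ there. Because $Y$ and $B$ are smooth and everything is in characteristic zero, I would work in the completed (or étale) local ring at $y$ and choose a system of regular parameters $x_1,\dots,x_n,t$ adapted to the snc divisor $D+g^*\Delta$. Here the key structural input is the snc hypothesis: by definition it lets me simultaneously straighten out all the relevant components, so that $g^*\Delta$ is cut out by $t=0$ (when $g(y)\in\Delta$) and the local components of $D$ passing through $y$ are among the coordinate hyperplanes $x_i=0$; moreover, since $D+g^*\Delta$ is snc, the fiber component and the horizontal components meet transversally, which is exactly what fails at points lying over the bad locus when one only logs along $D$.

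First I would recall the short exact sequence relating absolute and relative log differentials. For a morphism $g:Y\to B$ with a normal crossings divisor $E=D+g^*\Delta$ on $Y$ whose push-forward structure is compatible with $\Delta\subset B$, there is an exact sequence
\begin{equation*}
0\to g^*\Omega_B(\log\Delta)\to \Omega_Y(\log E)\to \Omega_{Y/B}(\log D)\to 0,
\end{equation*}
and the point is that with the full boundary $g^*\Delta$ included on the source the relative quotient becomes $\Omega_{Y/B}(\log D)$ with the corrected, locally free structure — I will denote this corrected relative sheaf by $\Omega_{Y/B}(\log(D+g^*\Delta))$ to match the statement. Next I would verify that $\Omega_Y(\log E)$ is locally free: this is the classical fact that the sheaf of log differentials along an snc divisor on a smooth variety is locally free, with local frame $\frac{dx_{i_1}}{x_{i_1}},\dots,\frac{dx_{i_k}}{x_{i_k}},dx_{j},\dots,\frac{dt}{t}$ according to which coordinates define components of $E$. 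Similarly $g^*\Omega_B(\log\Delta)$ is locally free of rank one with local generator $\frac{dt}{t}$ (or $dt$ away from $\Delta$).

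The main step is then to check that the quotient $\Omega_{Y/B}(\log(D+g^*\Delta))$ is locally free. I would do this by exhibiting an explicit local frame: away from $\Delta$ the map $g$ is, after the coordinate change, essentially a projection and the images of the logarithmic forms in the $x_i$-directions (logarithmic along those $x_i$ that define $D$, regular otherwise) give a basis of the relative sheaf; over a point of $\Delta$ the snc condition guarantees that $\frac{dt}{t}$ is a nowhere-zero local section of the sub-bundle $g^*\Omega_B(\log\Delta)$, so it splits off cleanly and the remaining $x$-direction log forms descend to a frame of the quotient. In both cases the quotient is free of rank $n$. I expect the main obstacle to be purely bookkeeping: carefully matching the local normal form produced by the snc hypothesis with the claimed frame, and in particular confirming that the transversality built into ``$D+g^*\Delta$ snc'' is precisely what makes $\frac{dt}{t}$ a non-vanishing sub-sheaf generator (this is the step that breaks down over $\Delta$ if one only logs along $D$, which is why Definition \ref{delta} excludes those points). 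Once the frame is produced on each chart and seen to be compatible on overlaps, local freeness follows and the lemma is proved. As the authors note, the computation is routine, so I would present only the local normal form and the resulting frame rather than grinding through the overlap checks.
\end{proof}

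=== END PROOF PROPOSAL ===
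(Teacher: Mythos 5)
The paper gives no proof of this lemma at all --- it is explicitly omitted as ``straightforward'' --- so there is nothing of the author's to compare against; your local-coordinate strategy is the standard (and surely intended) one, and its overall shape is right: realize $\Omega_{Y/B}\big(\log(D+g^*\Delta)\big)$ as the cokernel of $g^*\omega_B(\Delta)\to\Omega_Y\big(\log(D+g^*\Delta)\big)$, use the snc hypothesis to get local freeness of the ambient log sheaf, and show that the image of the local generator of the rank-one subsheaf is part of a local frame, so the quotient is a bundle.

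However, your local normal form skips the case that is the entire point of the lemma. You choose regular parameters $x_1,\dots,x_n,t$ on $Y$ with $g^*\Delta$ cut out by $t=0$; this presupposes that the pullback of the base coordinate is itself a regular parameter at $y$, i.e.\ that the fiber through $y$ is smooth and reduced there. But $\Delta$ is by definition the locus of $D$-singular fibers, and the hypothesis that $D+g^*\Delta$ is snc still allows the fiber over a point of $\Delta$ to be a reduced snc divisor with $k\geq 2$ components through $y$. At such a point $g$ is locally $t=x_1\cdots x_k$, the divisor $g^*\Delta$ is the union of the coordinate hyperplanes $x_1=0,\dots,x_k=0$ rather than a single smooth hypersurface $\{t=0\}$, and the generator of $g^*\omega_B(\Delta)$ maps to
$$g^*\frac{dt}{t}=\sum_{i=1}^{k}\frac{dx_i}{x_i},$$
not to $\frac{dt}{t}$ for a coordinate $t$ on $Y$. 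The conclusion still holds --- this section has coefficient vector $(1,\dots,1,0,\dots,0)$ in the frame $\frac{dx_1}{x_1},\dots,\frac{dx_k}{x_k},\dots$ of $\Omega_Y\big(\log(D+g^*\Delta)\big)$, hence is nowhere vanishing, so it extends to a local frame and the cokernel is free of rank $\dim Y-1$ --- but this is the computation that actually proves the lemma, and it is absent; the frame you exhibit only handles points where the fiber is smooth, which is not where local freeness is in doubt. (Note that reducedness of the fibers, forced by the snc hypothesis on $D+g^*\Delta$, is what makes the coefficients equal to $1$ here; this is worth flagging since it is the reason the paper later insists the resolution introduce no multiple fibers.)
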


Thus we have a short exact sequence of locally free sheaves $$0\to g^*\omega_B(\Delta)\to \Omega_{Y}\big(\log (D+g^*\Delta)\big) \to \Omega_{Y/B}\big(\log (D+g^*\Delta)\big)\to 0$$
which will be useful in the proof of Theorem \ref{van}.

\section{Positivity Results}
We wish to study families $g:(Y, D)\to B$ of log canonically polarized varieties (where the sheaf $\omega_{Y_b}(D_b)$ is ample for $b\in B$).  This is equivalent to saying that the sheaf $\omega_{Y/B}(D)$ is $g$-ample.  

For a non-isotrivial family of canonically polarized varieties, $g:Y\to B$, $g_*\omega_{Y/B}^r$ is ample for some $r>0$ \cite{Kollar87}.  The purpose of this section is to show that for an non-isotrivial family of log canonically polarized varieties, $g_*\omega_{Y/B}(D)^m$ is ample for some $m>0$.

\begin{theorem}\label{det}
Let $g:(Y,D)\to B$ be a non-isotrivial family with $\Delta\subset B$ the locus of $D$-singular fibers.  Suppose that the line bundle $\omega_{Y/B}(D)|_{Y_b}=\omega_{Y_b}(D_b)$ is ample for every $b\in B$.  Suppose further that for $b\in B$, the image of $Y_b$ under the embedding via $\omega_{Y_b}(D_b)^k$ into some $\P^n$ has only finitely many automorphisms that can be written as the restriction of a linear automorphism $\sigma$ of $\P^n$ to the image of $Y_b$.  Then $\det \big(g_*\omega_{Y/B}(D)^n\big)$ is ample on $B$ for $n>0$ appropriately large and divisible.  
\end{theorem}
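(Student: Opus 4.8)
The plan is to realize $E_n := g_*\omega_{Y/B}(D)^n$ as a nef vector bundle on the curve $B$ and then to upgrade nefness to strict positivity of its determinant by means of Kollár's Ampleness Lemma, with the finite-automorphism hypothesis supplying the quasi-finiteness that the lemma requires and non-isotriviality guaranteeing that the associated classifying morphism is non-constant. Since $B$ is a smooth projective curve, ampleness of $\det E_n$ is equivalent to $\deg \det E_n > 0$, so the entire argument reduces to proving that this degree is positive.

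First I would fix $n$ large and divisible enough that $\omega_{Y/B}(D)^n$ is $g$-very ample and that $R^i g_* \omega_{Y/B}(D)^n = 0$ for $i > 0$. By cohomology and base change, $E_n$ is then locally free of rank $N+1 = h^0(Y_b, \omega_{Y_b}(D_b)^n)$, its formation commutes with base change, and we obtain a closed embedding $Y \hookrightarrow \P_B(E_n)$ over $B$ whose fibrewise restriction is the embedding of $Y_b$ by the complete linear system $|\omega_{Y_b}(D_b)^n|$.

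Second, and this is the main obstacle, I would establish that $E_n$ is nef. In the canonically polarized case this is exactly the semipositivity of $g_*\omega_{Y/B}^r$ used in \cite{Kollar87}; here I need the logarithmic analogue for the pair $(Y,D)$. The idea is to invoke a Fujita--Kawamata--Viehweg type weak positivity theorem for $g_*\big(\omega_{Y/B}(D)^n\big)$, which holds in the log setting once the relevant sheaves are locally free along the boundary; the hypotheses that $D + g^*\Delta$ be snc (Lemma \ref{locfree}) and that $D$ meet the fibers transversely are precisely what make this available, possibly after a semistable reduction. On the curve $B$, weak positivity specializes to nefness of $E_n$, and hence $\det E_n$ is a nef line bundle of degree $\geq 0$.

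Third, I would feed this into Kollár's Ampleness Lemma. Using the multiplication maps $\mathrm{Sym}^k E_n \to g_*\omega_{Y/B}(D)^{nk}$, the degree-$k$ equations cutting out $Y_b \subset \P^N$ assemble into a classifying morphism from $B$ to the GIT quotient of an appropriate Grassmannian by $\mathrm{PGL}(N+1)$. The finiteness hypothesis says exactly that the stabilizer of the point $[Y_b]$ is finite, so this morphism is quasi-finite; non-isotriviality says it is non-constant, so its image is a curve. Kollár's Ampleness Lemma then converts the nefness of $E_n$ together with the quasi-finiteness of the classifying map into ampleness of $\det E_n$, i.e.\ $\deg \det E_n > 0$, completing the proof. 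The step I expect to be delicate is the second one: producing the correct logarithmic semipositivity statement for $\omega_{Y/B}(D)^n$ in the presence of $D$-singular fibers, since both the twist by the boundary divisor and the failure of local freeness over $\Delta$ must be controlled before the general weak positivity machinery applies.
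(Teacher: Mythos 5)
Your proposal follows essentially the same route as the paper: both arguments embed the fibers by a suitable power of $\omega_{Y_b}(D_b)$, read off the fibers from the kernel of the multiplication map $\sym^l(g_*\omega_{Y/B}(D)^m)\to g_*\omega_{Y/B}(D)^{ml}$, use non-isotriviality to make the resulting classifying map to the Grassmannian generically finite and the finite-automorphism hypothesis to make the $\Sl(r+1,k)$-stabilizer finite, and then conclude by Koll\'ar's Ampleness Lemma (cf.\ \cite[Theorem 4.34]{Viehweg95}). The one point where you go beyond the paper is that you explicitly identify the weak positivity (nefness) of $g_*\omega_{Y/B}(D)^m$ as a required input to the Ampleness Lemma and as the delicate step; the paper's proof of this theorem leaves that input implicit, establishing weak positivity only afterwards in Lemma \ref{wplem} and under the additional hypothesis that $\sO_Y(D)$ is semi-ample.
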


\begin{proof}
Since $\omega_{Y/B}(D)$ is ample when restricted to any fiber and $g$ is flat, there's a positive integer $m$ such that $\omega_{Y/B}(D)^m$ is very ample when restricted to the fibers of $g$.  Now consider the map $$\delta:\sym^l(g_*\omega_{Y/B}(D)^m)\to g_*\omega_{Y/B}(D)^{ml}$$
For any $b\in B$, we have the map 
$$\delta_b:\sym^l(g_*\omega_{Y/B}(D)^m)\otimes k(b)\to g_*\omega_{Y/B}(D)^{ml}\otimes k(b)$$ 
By base change and cohomology, $$\sym^l(g_*\omega_{Y/B}(D)^m)\otimes k(b)\iso \sym^l(H^0(Y_b, \omega_{Y_b}(D_b)^m))$$ and $$g_*\omega_{Y/B}(D)^{ml}\otimes k(b)\iso H^0(Y_b, \omega_{Y_b}(D_b)^{ml})$$ 
Consider the embedding $\phi_b:Y_b\to \P^r$ via the very ample sheaf $\omega_{Y_b}(D_b)^m$, where 
$$h^0(Y_b, \omega_{Y_b}(D_b)^m)=\rank g_*\omega_{Y/B}(D)^m=r+1$$
We identify $H^0(Y_b, \omega_{Y_b}(D_b)^m)$ with $H^0(\P^r, \sO_{\P^r}(1))$, and since $$\sym^l(H^0(\P^r,\sO_{\P^r}(1))) \iso H^0(\P^r,\sO_{\P^r}(l))$$
we can see that the kernel $K_b$ of $\delta_b$ is exactly the degree $l$ homogeneous polynomials that vanish on the image of $Y_b$ under this embedding.  By choosing $l$ large enough, we can recover the image of the variety $Y_b$ from $K_b$, so fix an $l$ for which this is true for all $b\in B_0$.  

The group $G=\Sl(r+1,k)$ acts on the vector space $H^0(Y_b, \omega_{Y_b}(D_b)^m)$ by changing basis, which corresponds to a different choice of coordinates on $\P^r$.  The kernel $K_b$ is a subspace of some dimension, say $d$, in $V=H^0(\P^r, \sO_{\P^r}(l))$, so it defines a point in the Grassmannian variety $\Gr(d,V)$.  Thus we can see that the $G$-orbit of $K_b$ in $\Gr(d,V)$, denoted by $[K_b]$ does not depend on the choice of basis for $H^0(Y_b, \omega_{Y_b}(D_b)^m)$.  

Now, fix a $b\in B$ and suppose that $[K_b]=[K_{b^\prime}]$ for some $b^\prime \in B$.  Then since $K_b$ recovers the image of the variety $Y_b$, this means that there is some linear change of coordinates $\sigma:\P^r\to \P^r$ such that $\sigma$ takes the image of $Y_{b^\prime}$ to the image of $Y_b$.  In particular, $\sigma|_{Y_{b^\prime}}:Y_{b^\prime}\to Y_b$ is an isomorphism.  Additionally, since $\sigma$ is a linear automorphism, $\sigma^*\sO_{\P^r}(1)\iso \sO_{\P^r}(1)$.  Also, since $Y_{b^\prime}$ and $Y_b$ are embedded in $\P^r$ by the maps $\phi_{b^\prime}$ and $\phi_b$, defined by the line bundles $\omega_{Y_{b^\prime}}(D_{b^\prime})^m$ and $\omega_{Y_b}(D_b)^m$ respectively, we see that $$\begin{array}{rcl}(\sigma|_{Y_{b^\prime}})^*\omega_{Y_b}(D_b)^m & \iso & (\sigma|_{Y_{b^\prime}})^*\phi_b^*\sO_{\P^r}(1)\\
& \iso & \phi_{b^\prime}^*\sigma^*\sO_{\P^r}(1)\\
& \iso & \phi_{b^\prime}^*\sO_{\P^r}(1)\\
& \iso & \omega_{Y_{b^\prime}}(D_{b^\prime})^m\\
\end{array}$$
Since $(\sigma|_{Y_{b^\prime}})$ is an isomorphism, $(\sigma|_{Y_{b^\prime}})^*\omega_{Y_b}\iso \omega_{Y_{b^\prime}}$, so is $(\sigma|_{Y_{b^\prime}})^*\sO_{Y_b}(D_b)^m\iso \sO_{Y_{b^\prime}}(D_{b^\prime})^m$.  The family $(Y,D)\to B$ is non-isotrivial, so there can be only finitely many $b^\prime$'s for which such an isomorphism $\sigma|_{Y_{b^\prime}}$ exists, and hence the set $\{b^\prime\in B\;\:\;[K_{b^\prime}]=[K_b]\}$ is finite. 

Now consider the set of linear automorphisms $\sigma:\P^r\to \P^r$ such that $\sigma(Y_b)=Y_b$.  First note that because $Y_b$ is embedded in $\P^r$, the projectivization of the vector space $H^0(Y_b, \omega_{Y_b}(D_b)^m)$ via $y\mapsto (\psi_1(y):\ldots:\psi_{r+1}(y))$ where $\{\psi_1,\ldots,\psi_{r+1}\}$ is a basis, the image is not contained in any hyperplane of $\P^r$.  Thus if $\sigma$ and $\sigma^\prime$ are both linear automorphisms of $\P^n$ with $\sigma|_{Y_b}=\sigma^\prime|_{Y_b}$, then $\sigma=\sigma^\prime$ since $Y_b$ will contain $n$ linearly independent vectors in $\P^n$.  

Since we assumed that the set of automorphisms of $Y_b$ that come from a linear automorphism of $\P^n$ is finite, then there exists at most finitely many $\sigma\in G=\Sl(r+1,k)$ that take $[K_b]$ to $[K_b]$.  Thus the stablizer of $[K_b]$ in $G$ is $0$-dimensional, and so $\dim G=\dim [K_b]$.  

Thus the kernel of the map $\delta$ has maximal variation, and so by \cite{Kollar90},
cf. \cite[Theorem 4.34]{Viehweg95}, $\det(g_*\omega_{Y/B}(D)^{ml})$ is ample on $B$.  

\end{proof}

Notice that if the fibers of our family have finite automorphism group, as in the case of varieties of general type, then they will satisfy the hypothesis of this theorem.

This theorem tells us that $\det(g_*\omega_{Y/B}(D)^m)$ is ample for some $m$, and we'd like to conclude that $g_*\omega_{Y/B}(D)^m$ is also ample on $B$.  To do this, we must prove several corollaries which are somewhat technical and draw heavily on the methods in \cite[Chapter 2]{Viehweg95}.  We also need to add the hypothesis that $\sO_Y(D)$ is semi-ample, which is a reasonable assumption since we are generalizing the case where $D=0$.    

First, consider the following definition.

\begin{definition}[cf.\protect{\cite[Definition 2.10]{Viehweg95}}]
Let $Y$ be a quasi-projective reduced scheme and $Y_0\subseteq Y$ an open dense subscheme and let $\sG$ be a locally free sheaf on $Y$, of finite, constant rank.  Then $\sG$ is called \emph{weakly positive} over $Y_0$ if:

For an ample invertible sheaf $\sH$ on $Y$ and for any given number $\alpha>0$ there exists some $\beta>0$ such that $\sym^{\alpha\cdot\beta}(\sG)\otimes \sH^\beta$ is globally generated over $Y_0$.  
\end{definition}

By \cite[Proposition 2.9]{Viehweg95}, we see that if $Y$ is projective, then a sheaf $\sG$ is weakly positive on $Y$ if and only if $\sG$ is nef.  The notion of weak positivity, however, generalizes better to quasi-projective varieties.  

The next result closely follows the proof of \cite[Corollary 2.45]{Viehweg95}.  

\begin{lemma}\label{wplem}
Let $g:Y\to B$ be a flat morphism of smooth quasi-projective varieties for which all the fibers are reduced normal varieties with at worst rational singularities, and let $D$ be a divisor on $Y$ such that $\sO_{Y}(D)$ is semi-ample and $\omega_{Y/B}(D)$ is $g$-semi-ample.  Then $g_*\omega_{Y/B}(D)^m$ is weakly positive on $B$ for all $m>0$.  
\end{lemma}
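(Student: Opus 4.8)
The plan is to follow Viehweg's proof of \cite[Corollary 2.45]{Viehweg95}, the new feature being the twist by $D$; the strategy is to reduce the twisted statement to the fundamental weak positivity theorem for relative dualizing sheaves by passing to a cyclic cover, and then to recover $g_*\omega_{Y/B}(D)^m$ as a quotient of the direct image on that cover. Throughout, the hypotheses that $g$ is flat and that the fibers are reduced normal varieties with at worst rational singularities are what make the machinery run: they guarantee that $g_*\omega_{Y/B}(D)^m$ is locally free and compatible with base change (so that weak positivity is defined for it and may be tested on fibers), and they will be preserved, after a careful choice of branch divisor, on the cover.

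First I would unwind the definition of weak positivity: fixing an ample invertible sheaf $\sH$ on $B$ and a number $\alpha>0$, it suffices to produce $\beta>0$ for which $\sym^{\alpha\beta}\big(g_*\omega_{Y/B}(D)^m\big)\otimes \sH^\beta$ is globally generated over $B_0$. The sections needed for this global generation are manufactured on a cover. Since $\sO_Y(D)$ is semi-ample, a power $\sO_Y(ND)$ is base-point free, and I would take a sufficiently general member $H$ of $|ND|$; by a relative Bertini argument $H$ can be chosen so that the associated degree-$N$ cyclic cover $\tau:Z\to Y$ has total space $Z$ normal and has all fibers of $h:=g\circ\tau$ again reduced normal with at worst rational singularities. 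On $Z$ the $N$-th root of $\sO_Y(D)$ that $\tau$ provides, together with the ramification formula, converts the twist $\sO_Y(mD)$ and, after an analogous covering to split off the $m$-th power, the power $\omega_{Y/B}(D)^m$ into (a summand of) the \emph{first} power of the relative dualizing sheaf $\omega_{Z/B}$. Kawamata--Viehweg vanishing is what guarantees the resulting splitting of $\tau_*\omega_{Z/B}$ into the relevant eigensheaves.

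With this reduction in hand, the $g$-semi-ampleness of $\omega_{Y/B}(D)$ yields the $h$-semi-ampleness of $\omega_{Z/B}$, so the fundamental weak positivity theorem of Viehweg applies to $h$ and shows that $h_*\omega_{Z/B}$ is weakly positive on $B$. Since weak positivity is inherited by quotient sheaves, and $g_*\omega_{Y/B}(D)^m$ is a direct summand, hence a quotient, of the weakly positive direct image produced on $Z$, it follows that $g_*\omega_{Y/B}(D)^m$ is weakly positive for every $m>0$, as desired.

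The step I expect to be the main obstacle is the cover construction: one must choose the branch divisors so that $Z$ and \emph{all} the fibers of $h$ retain the singularity hypotheses required to invoke the fundamental theorem, while simultaneously bookkeeping the ramification so that $\omega_{Y/B}(D)^m$ is identified precisely---and not as some unwanted twist---as a summand of the relative dualizing sheaf of the cover. Verifying that the given $g$-semi-ampleness of $\omega_{Y/B}(D)$ transfers to $h$-semi-ampleness of $\omega_{Z/B}$, which is exactly what licenses Viehweg's theorem on $Z$, is the other delicate point; it is here, and in the choice of $H\in|ND|$, that the two semi-ampleness hypotheses of the lemma are genuinely used.
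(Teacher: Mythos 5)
Your strategy works for $m=1$: since $\sO_Y(D)$ is genuinely (globally) semi-ample, a general member of $|ND|$ exists, and the cyclic cover branched along it exhibits $g_*\omega_{Y/B}(D)$ as a direct summand of $h_*\omega_{Z/B}$, to which the untwisted weak positivity theorem applies. The gap is in the step you describe as ``an analogous covering to split off the $m$-th power.'' To absorb $\omega_{Y/B}(D)^{m-1}$ into the dualizing sheaf of a further cyclic cover you need a divisor in (a multiple of) the linear system $|(m-1)(K_{Y/B}+D)|$, and no such divisor need exist: $\omega_{Y/B}(D)$ is only $g$-semi-ample, which produces sections on the fibers but no global sections on $Y$ (over $B=\P^1$, for instance, $g_*\omega_{Y/B}(D)^{m-1}$ could a priori be a sum of negative line bundles). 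This is not a peripheral ``delicate point'' to be checked at the end --- it is the central difficulty of the lemma and the reason the case $m\ge 2$ is much harder than $m=1$. Your reduction to ``the first power of $\omega_{Z/B}$'' therefore cannot be carried out as stated; note also that if it could, the $g$-semi-ampleness hypothesis would play no role, since weak positivity of $h_*\omega_{Z/B}$ for the first power requires no semi-ampleness at all.

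The paper's proof (following \cite[Corollary 2.45]{Viehweg95}) supplies exactly the missing mechanism. One first twists by an ample $\sH$ on $B$: taking the minimal $r=r(N)$ such that $g_*\omega_{Y/B}(D)^N\otimes\sH^{rN-1}$ is weakly positive (such $r$ exists trivially for $r\gg 0$), one deduces that $\omega_{Y/B}(D)\otimes g^*\sH^{r}$ is globally semi-ample; only now is a branch divisor available, and the covering construction --- packaged in the citation of \cite[Theorem 6.16]{Viehweg95} with $\sL_0=\sL^{N-1}\otimes\sO_Y(D)$ --- yields weak positivity of $g_*\omega_{Y/B}(D)^N\otimes\sH^{r(N-1)}$. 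Minimality of $r$ then forces $r\le N$, and the residual twist is removed by passing to finite covers of the base via \cite[Lemma 2.15(3)]{Viehweg95}. Without this bootstrap, or some substitute for it, your argument does not close. A minor additional correction: the decomposition of $\tau_*\omega_{Z/B}$ into eigensheaves for a cyclic cover is formal (the isotypic decomposition under the group action) and does not require Kawamata--Viehweg vanishing.
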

\begin{proof}
For any $\nu$, $\omega_{Y/B}(D)^\nu$ is $g$-semi-ample, and since $\sO_{Y}(D)$ is semi-ample,  $\omega_{Y/B}^{\nu-1}(\nu D)$ is $g$-semi-ample.  By \cite[Theorem 2.40]{Viehweg95} $\omega_{Y/B}(D)^\nu$ is locally free and commutes with arbitrary base change.  

Let $\sH$ be an ample invertible sheaf on $B$, and define 
$$r(\nu)=\min\{\mu>0\;:\;\;g_*\omega_{Y/B}(D)^\nu\otimes \sH^{\mu\nu-1}\textrm{ is weakly positive over $B$}\}$$
In particular, $\omega_{Y/B}(D)^\nu\otimes \sH^{r(\nu)\cdot\nu-1}$ is weakly positive over $B$, and by definition, this means that there's some $\beta>0$ so that 
$$\sym^\beta(g_*\omega_{Y/B}(D)^\nu\otimes \sH^{r(\nu)\cdot\nu-1})\otimes \sH^\beta=\sym^\beta(g_*(\omega_{Y/B}(D)^\nu\otimes g^*\sH^{r(\nu)\cdot \nu}))$$
is globally generated.  

Since $\omega_{Y/B}(D)$ is $g$-semi-ample, we can find an $N$ for which the natural maps $$g^*g_*\omega_{Y/B}(D)^N\to \omega_{Y/B}(D)^N\;\;\;\;\textrm{ and }\;\;\;\; \sym^d(g_*\omega_{Y/B}(D)^N)\to g_*\omega_{Y/B}(D)^{d\cdot N}$$ 
are both surjective, the second map for all $d$.  

Now let $\nu=N$ and $r=r(N)$.  Then by the choice of $r$ and the definition of weakly positive, 
$$\sym^\beta(g_*((\omega_{Y/B}(D)\otimes g^*\sH^r)^N))$$
is globally generated.  Then $\sym^\beta(g^*g_*((\omega_{Y/B}(D)\otimes g^*\sH^r)^N))$ is also globally generated, and by the choice of $N$, we have 
$$\sym^\beta(g^*g_*((\omega_{Y/B}(D)\otimes g^*\sH^r)^N))\epito (\omega_{Y/B)}(D)\otimes g^*\sH^r)^{\beta\cdot N}$$
so $\big(\omega_{Y/B}(D)\otimes g^*\sH^r\big)^{\beta\cdot N}$ is also globally generated, and hence $\omega_{Y/B}(D)\otimes g^*\sH^r$ is semi-ample.  

If we let $\sL=\omega_{Y/B}(D)\otimes g^*\sH^r$, then $\sL^{N-1}\otimes \sO_{Y}(D)$ is also semi-ample, and so if we apply \cite[Theorem 6.16]{Viehweg95} with $\sL_0=\sL^{N-1}\otimes \sO_{Y}(D)$, we get that
$$g_*(\sL_0\otimes \omega_{Y/B})=g_*\big(\omega_{Y/B}(D)^N\big)\otimes \sH^{r(N-1)}$$ is weakly positive over $B$.  But by the choice of $r=r(N)$ as the minimum, we see that we must have 
$$r(N-1)\geq rN-1>(r-1)N -1$$ which simplifies to $r-1<N$ or $r\leq N$.  

Thus, setting $\mu=N$, we get that $g_*\big(\omega_{Y/B}(D)^N\big)\otimes \sH^{(N^2-1)}$ is weakly positive.  Note that the choice of $N$ is based on the behavior of $\omega_{Y/B}(D)$ along the fibers of $g$, and so the previous statement holds if we replace $B$ by $B^\prime$ for any nonsingular finite cover $\tau:B^\prime\to B$ and any ample invertible sheaf $\sH$ on $B^\prime$.  Hence, by applying \cite[Lemma 2.15(3)]{Viehweg95}, we see that $g_*(\omega_{Y/B}(D)^N)$ is weakly positive.  

Applying a similar argument, we can find a $\beta>0$ so that $\sym^\beta(g_*(\omega_{Y/B}(D)^N))\otimes\sH^{\beta\cdot N}$ is globally generated, which means that $\omega_{Y/B}(D)\otimes\sH$ is semi-ample.  This means that $\big(\omega_{Y/B}(D)\otimes\sH\big)^{m-1}\otimes \sO_{Y}(D)$ is also semi-ample, and applying \cite[Theorem 6.16]{Viehweg95} again, $$g_*\big(\omega_{Y/B}^{m-1}(mD)\otimes g^*\sH^{m-1}\otimes \omega_{Y/B}\big)=g_*\omega_{Y/B}(D)^m\otimes \sH^{m-1}$$ is weakly positive.  Once again, the above statement holds if we replace $B$ by $B^\prime$ for a finite cover $\tau:B^\prime\to B_0$ and for any ample invertible sheaf $\sH$ on $B^\prime$.  Thus by applying \cite[Lemma 2.15(3)]{Viehweg95} again, we get that $g_*\omega_{Y/B}(D)^m$ is weakly positive for all $m>0$.  
\end{proof}

\begin{cor}\label{s-a}  Under the conditions of Lemma \ref{wplem}, assume also that $\omega_{Y/B}(D)$ is $g$-ample and that $B\iso \P^1$. Then $\omega_{Y/B}(D)$ is semi-ample. 
\end{cor}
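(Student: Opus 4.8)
The plan is to exploit the special geometry of the base $\P^1$ to turn weak positivity into global generation, and then transport that global generation up to $Y$ using the $g$-semi-ampleness of $\omega_{Y/B}(D)$.

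First I would invoke Lemma \ref{wplem}: all of its hypotheses are in force, so $g_*\omega_{Y/B}(D)^m$ is weakly positive over $B$ for every $m>0$, and by \cite[Theorem 2.40]{Viehweg95} it is locally free of constant rank, hence an honest vector bundle on $B$. Since $B\iso\P^1$ is projective, weak positivity over all of $B$ coincides with nefness by \cite[Proposition 2.9]{Viehweg95}, so each $g_*\omega_{Y/B}(D)^m$ is a nef vector bundle on $\P^1$.

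Next I would use the fact that every vector bundle on $\P^1$ splits as a direct sum of line bundles $\bigoplus_i \sO_{\P^1}(a_i)$, and that nefness forces $a_i\geq 0$ for every $i$. Because each $\sO_{\P^1}(a_i)$ with $a_i\geq 0$ is globally generated and a finite direct sum of globally generated sheaves is globally generated, I conclude that $g_*\omega_{Y/B}(D)^m$ is globally generated for every $m>0$. This is the step that genuinely uses $B\iso\P^1$: over a base of higher genus or higher dimension a nef bundle need not be globally generated, so the passage from nef to globally generated is the crux of the corollary and the only place where real content enters.

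Finally I would descend to $Y$. As already observed in the proof of Lemma \ref{wplem}, the $g$-semi-ampleness of $\omega_{Y/B}(D)$ produces an $N>0$ for which the evaluation map $g^*g_*\omega_{Y/B}(D)^N \epito \omega_{Y/B}(D)^N$ is surjective. Choosing a surjection $\sO_B^{\oplus k}\epito g_*\omega_{Y/B}(D)^N$ coming from the global generation just established and applying the (right-exact) functor $g^*$, I obtain a surjection $\sO_Y^{\oplus k}\epito g^*g_*\omega_{Y/B}(D)^N$; composing with the evaluation map shows that $\omega_{Y/B}(D)^N$ is globally generated on $Y$, which is exactly the statement that $\omega_{Y/B}(D)$ is semi-ample. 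I note that beyond the $g$-semi-ampleness already assumed in Lemma \ref{wplem}, the extra hypothesis that $\omega_{Y/B}(D)$ is $g$-ample is not strictly needed to produce this surjection; it is carried along because it is the setting of interest.
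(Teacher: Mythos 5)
Your proof is correct, but it routes through a different mechanism than the paper's. The paper argues fiberwise: it twists the sequence $0\to g^*\sO_B(-b)\to\sO_Y\to\sO_{Y_b}\to 0$ by $\sL^N$, uses $g$-ampleness to kill $R^1g_*\sL^N$, and then uses the splitting $g_*\sL^N\iso\bigoplus_i\sO_{\P^1}(a_i)$ with $a_i\geq 0$ (from weak positivity) to kill $H^1(\P^1, g_*\sL^N(-b))$; this yields surjectivity of $H^0(Y,\sL^N)\to H^0(Y_b,\sL^N|_{Y_b})$, which combined with fiberwise global generation gives global generation on $Y$. You instead observe that the same splitting with $a_i\geq 0$ already makes $g_*\sL^N$ globally generated on $\P^1$, and then compose a surjection $\sO_Y^{\oplus k}\epito g^*g_*\sL^N$ with the relative evaluation map $g^*g_*\sL^N\epito\sL^N$ furnished by $g$-semi-ampleness. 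Both arguments turn on the identical key input --- weak positivity plus the splitting of bundles on $\P^1$ forces nonnegative twists --- but yours is shorter, avoids the relative vanishing $R^1g_*\sL^N=0$, and, as you correctly note, uses only $g$-semi-ampleness where the paper invokes $g$-ampleness. The paper's version does produce the slightly stronger fact that restriction to each fiber is surjective on sections, which foreshadows the separation-of-fibers argument in the main theorem, but that is not needed for the corollary itself.
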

\begin{proof}
Let $\sL=\omega_{Y/B}(D)$.  For any $b\in B$, the fiber $Y_b$ is a divisor so $\sI_{Y_b}\iso \sO_{Y}(-Y_b)\iso g^*\sO_{B}(-b)$, and we can consider the short exact sequence $$0\to g^*\sO_{B}(-b)\to \sO_Y\to \sO_{Y_b}\to 0$$

Twisting by $\sL^N$ and pushing forward by $g$, we get the sequence 
$$0\to g_*\sL^N\otimes \sO_B(-b) \to g_*\sL^N\to g_*(\sL^N|_{Y_b}) \to R^1g_*\sL^N\otimes \sO_B(-b)$$

Since $\sL=\omega_{Y/B}(D)$ is $g$-ample, by choosing $N$ large enough, we have that $R^1g_*\sL^N=0$ by \cite[Theorem III, 12.9 and III, 5.2(b)]{Hartshorne77}. 
So we have the short exact sequence 
$$0\to g_*\sL^N\otimes \sO_B(-b) \to g_*\sL^N\to g_*(\sL^N)|_{Y_b}) \to 0$$
The sheaf $g_*(\sL^N|_{Y_b})$ is just $H^0(Y_b, \sL^N|_{Y_b})$, so when we take global sections of the above sequence, we get 

$$H^0(B, g_*\sL^N\otimes \sO_B(-b))\to H^0(B, g_*\sL^N) \to H^0(Y_b, \sL^N|_{Y_b}) \to H^1(B, g_*\sL^N(-b))$$
Since $B\iso\P^1$, then $g_*\sL^N$ splits as a direct sum of line bundles so $g_*\sL^N\iso \bigoplus_i \sO_{\P^1}(a_i)$.  Since $g_*\sL^N$ is weakly positive by \ref{wplem}, $a_i\geq 0$ for all $i$.  Thus 
$$H^1(B, g_*\sL^N\otimes \sO_B(-b))\iso \bigoplus_i H^1(\P^1, \sO_{\P^1}(a_i-1))\iso \bigoplus_i H^0(\P^1, \sO_{\P^1}(-1-a_i))=0$$
So we have a surjective map $H^0(B, g_*\sL^N) \twoheadrightarrow H^0(Y_b, \sL^N|_{Y_b})$.  Since $\sL$ is $g$-ample, for a large enough choice of $N$, $\sL^N|_{Y_b}$ is globally generated.  Since $H^0(Y,\sL^N)\iso H^0(B, g_*\sL^N)\twoheadrightarrow H^0(Y_b, \sL^N|_{Y_b})$, and $\sL^N|_{Y_b}$ is globally generated for every $b\in B$, $\sL^N$ is globally generated over $Y$ for an appropriately large $N$.  Thus $\sL=\omega_{Y/B}(D)$ is semi-ample.  

\end{proof}

The proof of the next corollary closely follows the proof of \cite[Theorem 6.22]{Viehweg95}.

\begin{cor}\label{ample}
Let $g:(Y,D)\to B$ be a non-isotrivial family with $\Delta\subset B$ the locus of $D$-singular fibers.  Suppose that $\sO_Y(D)$ is semi-ample, and that the line bundle $\omega_{Y/B}(D)$ is $g$-ample.  Suppose further that for $b\in B$, the image of $Y_b$ under the embedding via $\omega_{Y_b}(D_b)^k$ into some $\P^n$ has only finitely many automorphisms that can be written as the restriction of a linear automorphism $\sigma$ of $\P^n$ to the image of $Y_b$.  Then $g_*\omega_{Y/B}(D)^m$ is ample for $m>0$ appropriately large.  
\end{cor}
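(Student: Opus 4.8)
The plan is to feed the two positivity statements already established into the ampleness argument of \cite[Theorem 6.22]{Viehweg95}. Write $E_m:=g_*\omega_{Y/B}(D)^m$. First I would verify that Lemma \ref{wplem} applies: since $\omega_{Y/B}(D)$ is $g$-ample it is in particular $g$-semi-ample, $\sO_Y(D)$ is semi-ample by hypothesis, and the fibers of $g$ are the required reduced normal varieties with at worst rational singularities. Hence $E_m$ is weakly positive over $B$ for every $m>0$, which on the projective curve $B$ means that $E_m$ is nef. Next I would check that Theorem \ref{det} applies: the family is non-isotrivial, $\omega_{Y_b}(D_b)$ is ample for every $b$, and the finite-automorphism hypothesis is exactly the one assumed. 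Hence $\det E_n$ is ample for some large and divisible $n$.

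The heart of the proof is to upgrade ``$E_n$ nef together with $\det E_n$ ample'' to ``$E_n$ ample.'' I would follow Viehweg's fiber-product argument rather than any abstract implication, because the naive statement is false: on $\P^1$ the bundle $\sO_{\P^1}\oplus\sO_{\P^1}(2)$ is nef with ample determinant but fails to be ample. The extra input that rules out such behaviour is that the entire family, and not merely the bundle $E_n$, is available. Concretely, I would pass to the $s$-fold fiber product $g^{(s)}\colon Y^{(s)}=Y\times_B\cdots\times_B Y\to B$, resolve its singularities, and use the compatibility of the relative dualizing sheaf under fiber products to compare $g^{(s)}_*\omega_{Y^{(s)}/B}(D^{(s)})^n$ with $\bigotimes^s E_n$. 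Each such fiber-product family again satisfies the hypotheses of Lemma \ref{wplem}, so all of these tensor powers are weakly positive; combined with the ampleness of $\det E_n$ from Theorem \ref{det}, Viehweg's spanning argument then produces, for suitable $a,b$ and an ample $\sH$ on $B$, a global generation statement for $\sym^{a}(E_n)\otimes\sH^{b}$ that persists after twisting down by $\sH$, which is exactly the criterion for ampleness of $E_n$. Finally, from ampleness of one $E_n$ I would deduce ampleness of $E_m$ for all large $m$ using the surjections $\sym^d(E_N)\twoheadrightarrow E_{dN}$ furnished by the $g$-ampleness of $\omega_{Y/B}(D)$, as in the proof of Lemma \ref{wplem}.

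The step I expect to be the main obstacle is precisely this upgrade from nefness to ampleness. The difficulty is conceptual rather than computational: since nefness together with an ample determinant is not formally sufficient, one must genuinely exploit the maximal variation encoded in non-isotriviality, and the role of the fiber-product construction is to convert the strict positivity carried by $\det E_n$ into the absence of any numerically trivial quotient of $E_n$, so that the criterion of \cite[Theorem 6.22]{Viehweg95} may be invoked. Carefully checking the hypotheses of Lemma \ref{wplem} on the resolved fiber products, so that the weak positivity transports correctly through the comparison with $\bigotimes^s E_n$, is where the technical care is required; the passage between different powers $m$ is then routine.
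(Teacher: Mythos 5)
Your proposal follows essentially the same route as the paper's proof: invoke Theorem \ref{det} for the ampleness of $\det\big(g_*\omega_{Y/B}(D)^n\big)$, use the weak positivity coming from Lemma \ref{wplem} (via \cite[Theorem 6.16]{Viehweg95}) on the fiber product $Y^r=Y\times_B\cdots\times_B Y$ with the divisor $D^\star=\sum \PR_i^*D$, and then run the argument of \cite[Theorem 6.22]{Viehweg95} --- comparing $\bigotimes^r g_*\omega_{Y/B}(D)^n$ with the pushforward from the fiber product, extracting the section whose zero divisor $\Gamma$ misses the fibers, and concluding ampleness via \cite[Lemmas 2.24 and 2.25]{Viehweg95}. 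Your observation that ``nef plus ample determinant'' is not formally sufficient and that the fiber-product construction is the genuine content is exactly the right diagnosis, so the proposal is correct and matches the paper's argument.
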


\begin{proof}
Theorem \ref{det} says that we can choose an $n$ be such that $\det \big(g_*\omega_{Y/B}(D)^n\big)$ is ample.  Let $r=\rank g_*\omega_{Y/B}(D)^n$,  $Y^r$ the $r$-fold product of $Y$ over $B$, and $\PR_i:Y^r\to Y$ the $i$th projection.   Now consider the map $g^r:Y^r\to B$.  The induced morphism $g^r$ is again flat and we have that $\omega_{Y^r/B}=\bigotimes_{i=1}^{r} \PR_i^*\omega_{Y/B}$.  Thus if we let $D^\star=\sum_{i=1}^r \PR^*_iD$, we get that $$\omega_{Y^r/B}(D^\star)=\bigotimes_{i=1}^{r} \PR_i^*\omega_{Y/B}(D)$$
By flat base change 
$$g^r_*\omega_{Y^r/B}(D^\star)^n=\bigotimes_{i=1}^{r} g_*\omega_{Y/B}(D)^n$$
Now, we have the natural inclusion $$\det g_*\omega_{Y/B}(D)^n\to \bigotimes^r g_*\omega_{Y/B}(D)^n=g^r_*\omega_{Y^r/B}(D^\star)^n$$
which splits locally, so the zero divisor $\Gamma$ of the induced section 
$$\sO_Y\to ({g^r}^*\det g_*\omega_{Y/B}(D)^n)^{-1}\otimes \omega_{Y^r/B}(D^\star)^n$$ does not contain a fiber of $g^r$.  

Let $\sL_0=\omega_{Y^r/B}(D^\star)^{(m-1)}\otimes \sO_{Y^r}(D^\star)$ for some $m>0$.  We can see that since $\omega_{Y/B}(D)$ and $\sO_Y(D)$ are both semi-ample, so is $\omega_{Y^r/B}(D^\star)^{(m-2)}\otimes \sO_{Y^r}(2D^\star)$.  Thus by \cite[Theorem 6.16]{Viehweg95}, $g^r_*(\sL_0)$ is weakly positive over $B$.  
Notice that $$\begin{array}{rl} \sL_0^n & = \omega_{Y^r/B}(D^\star)^{n(m-1)}\otimes \sO_{Y^r}(nD^\star)\\
& = {g^r}^*(\det g_*\omega_{Y/B}(D)^n)^{m-1}\otimes \sO_{Y^r}((m-1)\Gamma + nD^\star)\\
\end{array}$$
We can now apply \cite[Theorem 6.21]{Viehweg95} to get that for some large $e>0$ 
$$\sym^e(\bigotimes^r g_*\omega_{Y/B}(D)^m)\otimes (\det g_*\omega_{Y/B}(D)^n)^{-(m-1)}$$ is weakly positive over $B$.  Thus if we pick $m$ such that $m-1$ is divisible by $r$, \cite[Lemma 2.25]{Viehweg95} and \cite[Lemma 2.24]{Viehweg95} give us that $g_*\omega_{Y/B}(D)^m$ is ample.  
\end{proof}

\section{Vanishing Theorem}
The second major ingredient that we will use in the proof of Theorem \ref{main} is a vanishing theorem.  However, for the proof of the following vanishing theorem, we will need to restrict ourselves to considering families with reduced fibers.  We also need that the divisor $D+g^*\Delta$ is simple normal crossing.  Both of these added hypotheses mimic the requirements of \cite[Lemma 1.1]{Kovacs00a}, from which the proof of Theorem \ref{van} borrows much machinery.  


\begin{theorem}\label{van}
Let $g:Y\to B$ be a morphism from a smooth $n$-dimensional projective variety $Y$ to a smooth projective curve $B$ with reduced fibers. Let $D$ be an effective snc divisor on $Y$ that is horizontal over $B$.  Let $\Delta\subset B$ be the locus of $D$-singular fibers, and suppose that $D+g^*\Delta$ is an snc divisor. Further, let $\phi:Y \to X$ be a morphism to an $n$-dimensional projective variety $X$ and $f:X\to B$ a morphism such that $g=f\circ \phi$ and assume that $\phi$ is an isomorphism away from $g^{-1}(\Delta)$. Let $\sL$ be a line bundle on $Y$ such that there exists an ample line bundle $\sA$ on $X$ and a natural number $m\in\N$ such that $\sL^m\iso \phi^*\sA$. Assume finally that $\omega_B(\Delta)^{-1}$ is nef.  Then for any line bundle, $\sK$, such that $\sK\supseteq\sL(-D)$, $$H^n(Y,\sK\otimes g^*\omega_B)=0$$
\end{theorem}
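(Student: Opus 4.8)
The plan is to first strip the statement down to its essential case and then feed it into the logarithmic vanishing machinery that the log cotangent sequence preceding Theorem~\ref{van} is designed to power.

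First I would reduce to the extremal twist $\sK=\sL(-D)$. The hypothesis $\sK\supseteq\sL(-D)$ means $\sK\iso\sL(-D)\otimes\sO_Y(E)$ for an effective divisor $E$, so tensoring the structure sequence of $E$ by $\sL(-D)\otimes g^*\omega_B$ gives an inclusion $\sL(-D)\otimes g^*\omega_B\hookrightarrow\sK\otimes g^*\omega_B$ whose cokernel is supported on $E$. Since a sheaf supported in dimension $n-1$ has no $n$-th cohomology, the long exact sequence yields a surjection $H^n(Y,\sL(-D)\otimes g^*\omega_B)\twoheadrightarrow H^n(Y,\sK\otimes g^*\omega_B)$, so it suffices to treat $\sK=\sL(-D)$. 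I would also record here that $\sL$ is nef and big, in fact semi-ample: since $\phi$ is birational (an isomorphism off $g^{-1}(\Delta)$) and $\sL^m\iso\phi^*\sA$ with $\sA$ ample, $\sL^m$ is the pullback of an ample bundle under a birational morphism.

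Next I would pass to the base curve by Leray. The fibres of $g$ have dimension $n-1$, so $R^ng_*=0$, and on a curve $H^p$ vanishes for $p\geq 2$; hence the Leray spectral sequence collapses to give $H^n(Y,\sL(-D)\otimes g^*\omega_B)\iso H^1(B,R^{n-1}g_*\sL(-D)\otimes\omega_B)$, using the projection formula for the factor $g^*\omega_B$. Each fibre is a Cartier divisor in the smooth variety $Y$, hence Cohen--Macaulay of pure dimension $n-1$, so relative duality applies and identifies the top direct image with the dual of the bottom one, $R^{n-1}g_*\sL(-D)\iso\big(g_*(\omega_{Y/B}(D)\otimes\sL^{-1})\big)^\vee$; Serre duality on $B$ then matches the target with $H^0\big(Y,\omega_{Y/B}(D)\otimes\sL^{-1}\big)^\vee$. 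Thus the whole theorem is equivalent to the single vanishing $H^0(Y,\omega_{Y/B}(D)\otimes\sL^{-1})=0$, and it is exactly here that the log cotangent sequence and the hypothesis that $\omega_B(\Delta)^{-1}$ is nef must be used together. Taking determinants in the sequence of Lemma~\ref{locfree} gives $\omega_{Y/B}(D)\iso\omega_Y(D+g^*\Delta)\otimes g^*\omega_B(\Delta)^{-1}$, so the bundle in question is $\Omega^n_Y\big(\log(D+g^*\Delta)\big)\otimes g^*\omega_B(\Delta)^{-1}\otimes\sL^{-1}$. I would realize this as the bottom graded piece of the relative log de Rham complex of the snc pair $(Y,D+g^*\Delta)$, filtered by the sub bundle $g^*\omega_B(\Delta)$, and then invoke the Esnault--Viehweg style logarithmic vanishing theorem for the nef and big bundle $\sL$ --- the same machinery used in \cite[Lemma 1.1]{Kovacs00a} and \cite[Chapter 6]{Viehweg95} --- to kill the cohomology of the relevant pieces. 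The reduced-fibre and snc hypotheses guarantee that $D+g^*\Delta$ is a genuine reduced snc boundary and, by Lemma~\ref{locfree}, that $\Omega_{Y/B}(\log(D+g^*\Delta))$ is locally free, which is what makes the filtration and the vanishing applicable, while the nef-ness of $\omega_B(\Delta)^{-1}$ enters through the twist $g^*\omega_B(\Delta)^{-1}$.

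The main obstacle, as the bookkeeping already shows, is that neither positivity input is as strong as the classical vanishing theorems want: $\sL$ is only nef and big rather than ample (because $\phi$ is merely birational), and after extracting $\omega_B(\Delta)$ the base twist is only nef rather than ample. Consequently one cannot apply Kodaira or Kawamata--Viehweg vanishing directly --- the relative anticanonical bundle $\omega_{Y/B}(D)^{-1}$ obstructs every naive global application --- and the argument must genuinely exploit the fibration, trading the bigness of $\sL$ along the base against the only-nef bundle $\omega_B(\Delta)^{-1}$ through the logarithmic filtration. I expect the delicate steps to be verifying the precise numerical hypotheses of the log vanishing theorem for the mixed twist $g^*\omega_B(\Delta)^{-1}\otimes\sL^{-1}$, and controlling the behaviour over the $D$-singular locus $\Delta$, where $\phi$ fails to be an isomorphism and the fibres degenerate, so that the cyclic-cover and $E_1$-degeneration input underlying the vanishing theorem still goes through.
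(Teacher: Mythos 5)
Your proposal follows essentially the same route as the paper: reduce to $\sK=\sL(-D)$ by a support argument, dualize down to the single vanishing $H^0(Y,\omega_{Y/B}(D)\otimes\sL^{-1})=0$ (the paper gets there in one step by Serre duality on $Y$ rather than via Leray and relative duality, but the endpoint is identical), and then play the log cotangent sequence of Lemma~\ref{locfree} against Esnault--Viehweg logarithmic vanishing, with the nefness of $\omega_B(\Delta)^{-1}$ keeping the twists ample. The only step you leave as a black box --- how the filtration actually ``kills'' that $H^0$ --- is carried out in the paper as a descending chain of injective connecting maps $H^{n-1-p}\big(\Omega^p_{Y/B}(\log(D+g^*\Delta))\otimes\sL_p^{-1}\big)\hookrightarrow H^{n-p}\big(\Omega^{p-1}_{Y/B}(\log(D+g^*\Delta))\otimes\sL_{p-1}^{-1}\big)$ with $\sL_p=\sL\otimes g^*\omega_B(\Delta)^{p-(n-1)}$, terminating in $H^{n-1}(Y,\sL_0^{-1})=0$.
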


\begin{proof}

Since $D+g^*\Delta$ is a snc divisor, we have a short exact sequence of locally free sheaves by Lemma \ref{locfree}:  
$$0\to g^*\omega_B(\Delta)\to \Omega_Y\big(\log (D+g^*\Delta)\big) \to \Omega_{Y/B}\big(\log (D+g^*\Delta)\big)\to 0$$

By \cite[II Ex. 5.16]{Hartshorne77}, we see that taking determinants gives us
$$
\begin{array}{rl} \Omega^{n-1}_{Y/B}\big(\log (D+g^*\Delta)\big) &
 =\det\Omega_{Y/B}\big(\log (D+g^*\Delta)\big) \\
& =\det\Omega_Y\big(\log (D+g^*\Delta)\big)\otimes g^*\omega_B(\Delta)^{-1}\\
& =\omega_Y(D+g^*\Delta)\otimes g^*\omega_B(\Delta)^{-1}\\
& =\omega_{Y/B}(D)\\
\end{array}$$
since $D+g^*\Delta$ is reduced.  

Also by \cite[II Ex. 5.16]{Hartshorne77}, for all $p=0,\ldots,n-1$ we have the short exact sequences
$$0\to \Omega^{p-1}_{Y/B}\big(\log (D+g^*\Delta)\big)\otimes g^*\omega_B(\Delta) \to \Omega^p_Y\big(\log (D+g^*\Delta)\big) \to \Omega^p_{Y/B}\big(\log (D+g^*\Delta)\big)\to 0$$

Let $\sL_p=\sL\otimes g^*\omega_B(\Delta)^{p-(n-1)}$ for $p=0, \ldots, n-1$.  Twisting the above sequence by $\sL_p^{-1}$, we get 
$$0\to \Omega^{p-1}_{Y/B}\big(\log (D+g^*\Delta)\big)\otimes \sL_{p-1}^{-1}\to 
\Omega^p_Y\big(\log (D+g^*\Delta)\big)\otimes \sL_p^{-1} \to 
\Omega^p_{Y/B}\big(\log (D+g^*\Delta)\big)\otimes \sL_p^{-1}\to 0$$

Now, $\sL_p^m=\sL^m\otimes g^*\omega_B(\Delta)^{m(p-(n-1))}=\phi^*(\sA\otimes f^*\omega_B(\Delta)^{m(p-(n-1))})$.  Since $\omega_B(\Delta)^{-1}$
is nef and $\sA$ is ample, $\sA\otimes f^*\omega_B(\Delta)^{m(p-(n-1))}$ is also ample.  Since $\phi$ is an isomorphism outside of $g^{-1}(\Delta)$, we can apply \cite[Corollary 6.7]{EV92} to get that $$H^{n-1-p}\big(Y, \Omega^p_Y\big(\log (D+g^*\Delta)\big)\otimes \sL_p^{-1}\big)=0$$

Thus the map we get from the corresponding long exact sequence
$$H^{n-1-p}\big(Y, \Omega^p_{Y/B}\big(\log (D+g^*\Delta)\big)\otimes \sL_p^{-1}\big)
\to H^{n-1-(p-1)}\big(Y, \Omega^{p-1}_{Y/B}\big(\log (D+g^*\Delta)\big)\otimes \sL_{p-1}^{-1}\big)$$
is injective for all $p$.  Thus by composing these maps, we get that
$$H^{0}\big(Y, \Omega^{n-1}_{Y/B}\big(\log (D+g^*\Delta)\big)\otimes \sL_{n-1}^{-1}\big)\to
H^{n-1}\big(Y, \Omega^{0}_{Y/B}\big(\log (D+g^*\Delta)\big)\otimes \sL_{0}^{-1}\big)$$ 
is injective, that is
$$H^{0}(Y, \omega_{Y/B}(D)\otimes \sL^{-1})\to
H^{n-1}(Y, \sL_{0}^{-1})$$
is injective.

But $H^{n-1}(Y, \sL_{0}^{-1})=0$ by \cite[6.7]{EV92}, so $H^{0}(Y, \omega_{Y/B}(D)\otimes \sL^{-1})=0$, and then Serre duality, 
$$H^{n}(Y, \sL(-D)\otimes g^*\omega_B)=0$$

Now if $\sK$ is a line bundle with $\sL(-D)\subseteq \sK$, and we call $\sQ=\sK/\sL(-D)$, then $\sQ$ must be supported on a proper subvariety.  Thus $H^n(Y, \sQ\otimes g^*\omega_B)=0$, so the map $$H^n(Y, \sL(-D)\otimes g^*\omega_B)\to H^n(Y, \sK\otimes g^*\omega_B)$$ is surjective.  Thus $H^n(Y, \sK\otimes g^*\omega_B)=0$.  

\end{proof}

We won't use the full generality of this vanishing theorem in the proof of Theorem \ref{main}.  We'll make the reduction that $\phi=\id$ and choose $\sL$ to be an ample line bundle.

\section{Main Result}
In order to take advantage of Theorem \ref{van} from the previous section, we must have a family for which $D+g^*\Delta$ is snc.  

Given a family $g:(Y,\sL)\to B$ with $\sL\iso \omega_Y(D)$ for an effective snc divisor $D$, we can replace it with another family, isomorphic to the original one over $B_0=B\setminus\Delta$ which has $D+g^*\Delta$ is snc, since we are interested ultimately in the size of the set $\Delta$ from Definition \ref{delta}.  We do this by taking a resolution  $\sigma:\tilde{Y}\to Y$ where the exceptional locus lies in $g^{-1}\Delta$, and we can then replace $Y$ with $\tilde{Y}$, $g$ with $\tilde{g}=\sigma\circ g$, and $D$ with its strict tansform $\tilde{D}$, and we'll have a family that's isomorphic to the original one over $B_0$. 
 
In particular, we can take an embedded resolution of the divisor $D+g^*\Delta$, and since $D$ itself was snc, the exceptional locus will lie in $g^{-1}\Delta$.  Then the support of the replacement divisor $\tilde{D}+\tilde{g}^*\Delta$ will be snc.  However, $\tilde{D}+\tilde{g}^*\Delta$ itself must be snc, so in particular, it must be reduced.  Thus the resolution $\sigma$ cannot introduce multiple fibers.  

We can ensure this in the following way.  First, we must assume that the fibers of $g$ which are not smooth are at least snc, so that multiple fibers are not introduced from resolving singularities in the fibers.  Secondly, if we require that each component of the divisor $D$ intersects the fibers of $g$ transversely and avoids the singular locus of the fibers over $\Delta$, we will avoid introducing multiple fibers in resolving the singularities of $D+g^*\Delta$.  These second conditions ensure for each succesive blow-up in the resolution, the exceptional locus will lie away from the singular set of the fiber.  Thus the resolution may introduce new components to the fiber, but each new components is introduced with multiplicity $1$. 

Now that we have Corollary \ref{ample} and Theorem \ref{van}, we can use the structure of the proof of \cite[0.2]{Kovacs00a} to prove the main result, Theorem \ref{main}, which we now restate.  

\begin{theorem}\label{main2}
Let $g:(Y,D)\to \P^1$ be a family, $\Delta\subset \P^1$ the locus of $D$-singular fibers and $B_0=\P^1\setminus \Delta$.  Suppose that the line bundle $\omega_{Y/B}(D)$ is $g$-ample and that $|\Aut(Y_b)|<\infty$ for $b\in \P^1$.  Suppose also that $D$ is an snc divisor for which each component intersects the fibers of $g$ transversely and avoids the singular locus of the fibers over $\Delta$.  

Then either $g$ is isotrivial or $|\Delta|\geq 3$.  
\end{theorem}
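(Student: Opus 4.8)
The plan is to argue by contradiction: assume that $g$ is non-isotrivial and that $|\Delta|\le 2$, and produce a numerical contradiction; since the theorem is an ``or'' statement, proving $|\Delta|\ge 3$ under non-isotriviality suffices. The whole argument takes place after the reduction described above, so I may assume that $D+g^*\Delta$ is snc, that the fibres of $g$ are reduced, and that $g$ is unchanged over $B_0$. In particular $\Delta$, the isotriviality type of $g$, and the finiteness of $\Aut(Y_b)$ are all preserved, and the hypotheses of both Corollary \ref{ample} and Theorem \ref{van} are now in force: the finiteness $|\Aut(Y_b)|<\infty$ supplies the linear-automorphism hypothesis exactly as in the remark following Theorem \ref{det}, and $\sO_Y(D)$ is semi-ample as a standing assumption. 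By Lemma \ref{locfree} the sheaf $\Omega_{Y/B}(\log(D+g^*\Delta))$ is then locally free, so the log-exact sequence $0\to g^*\omega_B(\Delta)\to \Omega_Y(\log(D+g^*\Delta))\to \Omega_{Y/B}(\log(D+g^*\Delta))\to 0$ is available.

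The two inputs pull in opposite numerical directions. On the positive side, Corollary \ref{ample} gives that $g_*\omega_{Y/B}(D)^m$ is ample on $\P^1$ for $m\gg 0$; since a bundle on $\P^1$ is ample iff it splits as $\bigoplus_i \sO_{\P^1}(a_i)$ with every $a_i\ge 1$, this forces $\deg\det g_*\omega_{Y/B}(D)^m=\sum_i a_i>0$. On the vanishing side, $|\Delta|\le 2$ gives $\deg\omega_{\P^1}(\Delta)=|\Delta|-2\le 0$, so $\omega_B(\Delta)^{-1}$ is nef on $\P^1$ and Theorem \ref{van} applies (with the reduction $\phi=\id$, $X=Y$, $f=g$, and $\sL$ ample, $\sA=\sL^m$). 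What I would extract from Theorem \ref{van} is not the bare vanishing but the chain of injective connecting maps its proof produces from the log-exact sequence: relativised over $B$ these are the iterated Kodaira--Spencer maps $\theta_p\colon R^{n-1-p}g_*\Omega^p_{Y/B}(\log)\to R^{n-p}g_*\Omega^{p-1}_{Y/B}(\log)\otimes\omega_B(\Delta)$, beginning at $g_*\omega_{Y/B}(D)=g_*\Omega^{n-1}_{Y/B}(\log)$ and running $n-1$ steps down to $R^{n-1}g_*\sO_Y$.

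I would then convert non-isotriviality into the statement that the $(n-1)$-fold composite of the $\theta_p$ is nonzero, so that the ample part of the Hodge-type bundle attached to $g_*\omega_{Y/B}(D)^m$ is carried injectively into a sheaf twisted by $\omega_B(\Delta)^{\otimes(n-1)}$. Comparing degrees across this injection, nefness of $\omega_B(\Delta)^{-1}$ bounds the target degree from above and yields an Arakelov-type inequality of the shape $\deg\det g_*\omega_{Y/B}(D)^m\le c\,(|\Delta|-2)$ for a positive constant $c=c(n,m,\rank)$. Together with the strict lower bound of the previous paragraph this gives $0<\deg\det g_*\omega_{Y/B}(D)^m\le 0$, the desired contradiction, forcing $|\Delta|\ge 3$.

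The hard part is the step in which the qualitative injectivity furnished by Theorem \ref{van} is upgraded to this quantitative degree bound. One must globalise the cohomological injections in the proof of Theorem \ref{van} into honest maps of locally free sheaves on $\P^1$, track every $\omega_B(\Delta)$-twist, and reconcile the passage between the first power $\omega_{Y/B}(D)$, where the Higgs field naturally lives, and the $m$-th power, where Corollary \ref{ample} supplies positivity (this last point is where a cyclic-cover construction or a symmetric-power/slope manipulation is likely needed to manufacture an ample subsheaf of a Hodge-type bundle). Verifying that non-isotriviality really forces the iterated field to be nonzero on that ample subsheaf, so that ampleness bounds the source degree from below while nefness bounds the target degree from above, is the crux on which the whole theorem turns.
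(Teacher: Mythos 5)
Your setup is right (reduce to the snc case, argue by contradiction from non-isotriviality and $|\Delta|\le 2$, invoke Corollary \ref{ample} for positivity and the nefness of $\omega_{\P^1}(\Delta)^{-1}$ for the vanishing side), but the way you propose to combine the two inputs diverges from the paper and leaves the decisive steps unproved. You explicitly decline to use ``the bare vanishing'' of Theorem \ref{van} and instead plan to globalise the cohomological injections in its proof into sheaf maps on $\P^1$, prove that non-isotriviality forces the iterated Kodaira--Spencer composite to be nonzero on an ample subsheaf, and extract an Arakelov-type inequality $\deg\det g_*\omega_{Y/B}(D)^m\le c\,(|\Delta|-2)$. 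That is a recognizable Viehweg--Zuo-style strategy, but every one of its load-bearing steps --- the globalisation, the nonvanishing of the iterated field on the ample part, the passage between the first and $m$-th powers --- is flagged by you as ``the hard part'' or ``the crux'' and is not carried out, nor is any of it established elsewhere in the paper. As written, the proposal is a plan for a different (and substantially harder) proof, not a proof.

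The idea you are missing is the bridge the paper actually uses: from the ampleness of $g_*\omega_{Y/\P^1}(D)^m$ on $\P^1$ one deduces that $\omega_{Y/\P^1}(D)$ is ample on the total space $Y$. Concretely, ampleness of the pushforward kills the $H^1$ obstruction on $\P^1$, so for $r\gg 0$ the restriction $H^0(Y,\omega_{Y/\P^1}(D)^r)\to H^0(Y_s,\omega_{Y_s}(D_s)^r)\oplus H^0(Y_t,\omega_{Y_t}(D_t)^r)$ is surjective for any two points $s,t$; since the sheaf is already very ample on fibers, the induced map separates both points within fibers and distinct fibers from each other, hence some power of $\omega_{Y/\P^1}(D)$ defines an embedding of $Y$. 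Once $\sL=\omega_{Y/\P^1}(D)$ is known to be ample on $Y$, Theorem \ref{van} applies verbatim with $\phi=\id$ and $\sK=\omega_{Y/\P^1}=\sL(-D)$, giving $H^n(Y,\omega_Y)=0$ --- which contradicts Serre duality, since $H^n(Y,\omega_Y)\iso H^0(Y,\sO_Y)^\vee\neq 0$ for $Y$ projective and connected. This is an entirely qualitative contradiction; no degree estimate or Higgs-field nonvanishing is needed. If you want to salvage your route, you would have to actually prove the Arakelov inequality you posit, which is a significant undertaking not supported by the tools assembled in this paper.
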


\begin{proof}
The hypothesis on the divisor $D$ allows us to take an embedded resolution of $D+g^*\Delta$ in such a way that the resulting divisor $\tilde{D}+\tilde{g}^*\Delta$ not only has snc support but is also reduced, as describe above.  Thus, by potentially replacing $g$ with $\tilde{g}$, we may assume that $D+g^*\Delta$ is snc.  

Now let us assume that $g$ is non-isotrivial and that $|\Delta|\leq 2$.  

By Corollary \ref{ample}, $g_*\omega_{Y/\P^1}(D)^m$ is ample for some $m>0$.  Thus 
$$g_*\omega_{Y/\P^1}(D)^m\iso \bigoplus_{i}\sO_{\P^1}(a_i), \;\;\;\;\;\;a_i\geq 1$$
Let $s,t\in B$, and let $\sI_{s,t}$ be their ideal sheaf.  Then there exist an $l_0\in \N$ such that for every $l\geq l_0$ and $i>0$, 
$$H^i(\P^1, \sym^l(g_*\omega_{Y/\P^1}(D)^m)\otimes \sI_{s,t})=0$$

Thus if we take the short exact sequence $$0\to \sI_{s,t}\to \sO_{\P^1}\to \sO_{\{s,t\}}\to 0,$$ twist by $g_*\omega_{Y/\P^1}(D)^m$, and then apply the long exact sequence in cohomology, we get that the map
$$\nu: H^0(\P^1, \sym^l(g_*\omega_{Y/\P^1}(D)^m))\to \left(\sym^l(g_*\omega_{Y/\P^1}(D)^m)\otimes k(s)\right)\oplus \left(\sym^l(g_*\omega_{Y/\P^1}(D)^m)\otimes k(t)\right)$$ 
is surjective.  

Additionally, since $\omega_{Y/\P^1}(D)^m$ is semi-ample when restricted to $Y_s$ and $Y_t$, the map 
$$\epsilon: \sym^l(g_*\omega_{Y/\P^1}(D)^m)\to g_*\omega_{Y/\P^1}(D)^{lm}$$
is also surjective over $B_0$ for $l>>0$.  This gives us the following commutative diagram:

$$\xymatrix{H^0(\P^1, \sym^l(g_*\omega_{Y/\P^1}(D)^m)) \ar[r]^-{\nu} \ar[d] & \left(\sym^l(g_*\omega_{Y/\P^1}(D)^m)\otimes k(s)\right)\oplus \left(\sym^l(g_*\omega_{Y/\P^1}(D)^m)\otimes k(t)\right) \ar[d]_{\epsilon}\\
H^0(\P^1, \sym^l(g_*\omega_{Y/\P^1}(D)^m)) \ar[r]_-{\sigma}  & \left(g_*\omega_{Y/\P^1}(D)^{lm}\otimes k(s)\right)\oplus \left(g_*\omega_{Y/\P^1}(D)^{lm}\otimes k(t)\right)\\
}
$$

Since $\nu$ and $\epsilon$ are both surjective, $\sigma$ must be surjective as well.  

Replacing $lm$ by $r$, we get that

\begin{equation}\label{sepfibers}
H^0(Y, \omega_{Y/\P^1}(D)^r)\to H^0(Y_s, \omega_{Y_s}(D_s)^r)\oplus H^0(Y_t, \omega_{Y_t}(D_t)^r)
\end{equation}
is surjective for sufficiently large and divisible $r>0$.  

Choose $r$ large enough so that \ref{sepfibers} is surjective and so that $\omega_{Y_s}(D_s)^r$ is ample.  Then $\omega_{Y/\P^1}(D)^r$ is generated by global sections, and it defines a rational map $$\phi=\phi_{\omega_{Y/\P^1}(D)^r}:Y\ratmap X,$$
where $\phi|_{Y_s}=\phi_{\omega_{Y_s}(D_s)^r}$ and $\phi$ separates the fibers of $g$.  Since $\omega_{Y/\P^1}(D)^r$ is also ample on the fibers, the map $\phi$ is one-to-one, and thus some power of $\omega_{Y/\P^1}(D)^r$ defines an embedding.  Thus $\omega_{Y/\P^1}(D)$ is ample on $Y$.  

Since $|\Delta|\leq 2$, $\omega_{\P^1}(\Delta)^{-1}$ is nef, so we can apply Theorem \ref{van} with $\phi=\id$, $\sL=\omega_{Y/\P^1}(D)$ and $\sK=\omega_{Y/\P^1}=\sL(-D)$ to get that $$H^n(Y, \omega_{Y/\P^1}\otimes g^*\omega_{\P^1})=H^n(Y, \omega_Y)=0$$ 
However, since $Y$ is projective, Serre duality implies that $H^0(Y, \sO_Y)=0$, which is a contradiction. 

Thus either $g$ is isotrivial or $\omega_{\P^1}(\Delta)^{-1}$ is not nef, that is, $|\Delta|\geq 3$.

\end{proof}

\bibliographystyle{skalpha}
\bibliography{ref2}

\end{document}